\newtheorem{theorem}{Theorem}
\newtheorem{lemma}{Lemma}
\theoremstyle{definition}
\newtheorem{definition}{Definition}
\theoremstyle{remark}
\newtheorem{remark}{Remark}
\numberwithin{equation}{section}
\renewcommand{\Re}[1]{\operatorname{Re}{#1}}
\renewcommand{\Im}[1]{\operatorname{Im}{#1}}
\begin{document}

\title{Note on Wermuth's theorem on commuting operator exponentials}

\author{Krzysztof Szczygielski}
\address{Institute of Theoretical Physics and Astrophysics, Faculty of Mathematics, Physics and Astrophysics, University of Gdansk, Wita Stwosza 57, 80-308 Gdansk, Poland}

\email{fizksz@ug.edu.pl}

\date{\today}

\begin{abstract}
We apply Wermuth's theorem on commuting operator exponentials to show that if $A, B \in B(X)$, $X$ being Banach space and $A$ of $2\pi i$--congruence free spectrum, then $e^A B = B e^A$ if and only if $AB=BA$. We employ this observation to provide alternative proof of similar result by Chaban and Mortad, applicable for $X$ being a Hilbert space.
\end{abstract}

\maketitle

\section{Introduction. Wermuth's theorem}

It is well known that if two elements $a$, $b$ of noncommutative unital Banach algebra commute, then their exponentials $e^a$ and $e^b$ also commute. Although the converse statement is wrong in general, it was shown by E. Wermuth that if $A$ and $B$ are bounded operators on Banach space satisfying additional condition of being of $2\pi i$--congruence free spectrum, then the opposite implication also remains true.

\begin{definition}
Let $S\subset\mathbb{C}$ and let $z\in\mathbb{C}$ be arbitrarily chosen. We say that $S$ is $z$-congruence free if and only if no two different elements $s_1, s_2 \in S$ exists such that $s_1 = s_2 \, (\mathrm{mod} \, z)$. Equivalently,
\begin{equation}
	\forall \, s_1, s_2 \in S : s_1 - s_2 \neq k z, \qquad k\in\mathbb{Z}\setminus \{0\}.
\end{equation}
\end{definition}

Let $B(X)$ be a Banach algebra of all bounded linear endomorphisms over Banach space $X$. The Wermuth's theorem is formulated for pairs of operators $A,B \in B(X)$ and involves $2\pi i$--congruence freedom of both spectra $\sigma (A)$, $\sigma (B)$ and can be stated as follows:

\begin{theorem}[\textbf{Wermuth}]\label{thm:Wermuth}
Let $A,B\in B(X)$ and let both $\sigma(A),\sigma(B)\subset\mathbb{C}$ to be $2\pi i $--congruence free. Then, $e^A e^B = e^B e^A$ if and only if $AB=BA$.
\end{theorem}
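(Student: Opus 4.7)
The implication $AB = BA \Rightarrow e^A e^B = e^B e^A$ is immediate from the power series definition of $\exp$, since then $B$ belongs to the commutant of $A$, which contains every polynomial in $A$ and, by continuity, $e^A$ itself; swapping $A$ and $B$ finishes that direction. The substantive content is the converse, and my plan is to recover $A$ and $B$ from their exponentials via the holomorphic functional calculus, using the $2\pi i$-congruence free hypothesis to manufacture honest holomorphic logarithm branches.

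The first step I would carry out is to produce an open neighborhood $U \supset \sigma(A)$ on which $\exp$ is injective. Suppose not; then for each $n$ one finds $z_n \neq w_n$ with $\exp(z_n) = \exp(w_n)$ and $\mathrm{dist}(z_n, \sigma(A)), \mathrm{dist}(w_n, \sigma(A)) \to 0$. The equality forces $z_n - w_n = 2\pi i k_n$ for some $k_n \in \mathbb{Z}\setminus\{0\}$, in particular $|z_n - w_n| \geq 2\pi$. Since $\sigma(A)$ is compact and $z_n, w_n$ approach it, both sequences are bounded, hence so is $\{k_n\}$. Passing to subsequences, $z_n \to z$, $w_n \to w$ in $\sigma(A)$ with $k_n \to k \in \mathbb{Z}\setminus\{0\}$, giving $z - w = 2\pi i k \neq 0$ and contradicting the $2\pi i$-congruence freedom of $\sigma(A)$.

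Once $U$ is in hand, I would observe that $\exp : U \to \exp(U)$ is a bijective holomorphic map with non-vanishing derivative, and therefore a biholomorphism onto the open set $\exp(U)$; call its inverse $\log_A$. By the spectral mapping theorem $\sigma(e^A) = e^{\sigma(A)} \subset \exp(U)$, so $\log_A$ is holomorphic on a neighborhood of $\sigma(e^A)$, and the functional calculus yields $\log_A(e^A) = A$, using $\log_A \circ \exp = \mathrm{id}$ on $U$. The symmetric construction produces $\log_B$ with $\log_B(e^B) = B$. Now assume $e^A e^B = e^B e^A$: since $e^B$ commutes with $e^A$ and $\log_A$ is holomorphic on a neighborhood of $\sigma(e^A)$, it follows that $e^B$ commutes with $\log_A(e^A) = A$; then, since $A$ commutes with $e^B$, applying the same principle to $\log_B$ gives $[A, B] = [A, \log_B(e^B)] = 0$.

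The main obstacle I anticipate is cleanly carrying out the neighborhood construction for $U$; the rest of the argument is a mechanical invocation of holomorphic functional calculus and of the fact that $f(T)$ commutes with every operator commuting with $T$. A more delicate point deserving explicit verification is the identity $\log_A(e^A) = A$, which relies on $\log_A \circ \exp$ being the identity not merely on $\sigma(A)$ but on an open set containing it, as ensured by the injectivity of $\exp$ on $U$ together with the open mapping theorem.
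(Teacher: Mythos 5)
The paper does not prove this theorem: it is imported from Wermuth's papers as a known result, so there is no in-paper argument to compare against. Your proposal is a correct, self-contained proof, and it follows what is essentially Wermuth's own route in the Banach-space setting: use $2\pi i$--congruence freedom plus compactness of the spectrum to get an open neighborhood $U\supset\sigma(A)$ on which $\exp$ is injective, invert it to a holomorphic branch $\log_A$ on the open set $\exp(U)\supset\sigma(e^A)$, recover $A=\log_A(e^A)$ via the composition rule of the Riesz--Dunford calculus, and use the fact that anything commuting with $T$ commutes with $f(T)$. All the delicate points you flag (the sequential compactness argument forcing a nonzero integer $2\pi i k$ difference between two spectral points, and the need for $\log_A\circ\exp=\mathrm{id}$ on an open set rather than just on $\sigma(A)$) are handled correctly; the only cosmetic slip is in the easy direction, where you should say that the commutant of $B$ is a closed subalgebra containing $A$ and hence $e^A$, rather than that the commutant of $A$ contains polynomials in $A$.
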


Original formulation \cite{Wermuth1989} of Wermuth's theorem concerned finite dimensional matrix algebra $M_n (\mathbb{C})$ and then was generalized \cite{Wermuth1997} to the case of $B(X)$ for any Banach space $X$. Later on, several results concerning commutativity of exponentials (or their functions) in noncommutative unital algebras emerged (see e.g. \cite{Schmoeger1999,Schmoeger2000,Schmoeger2002,Paliogiannis2003,Bourgeois2007}), with or without making explicit use of $2\pi i$--congruence freedom hypothesis; in particular, interesting result was obtained by Chaban and Mortad in \cite{Chaban2013} for a case of C*-algebra $B(H)$ for Hilbert space $H$, which roughly says that if $A\in B(H)$ is normal operator of ``well-behaved'' spectrum, then $e^A B = Be^A$ if and only if $AB=BA$. In this paper, we formulate a similar result in the Banach space setting in Section \ref{sec:MainResult}. This allows us to we present alternative proof of theorem by Chaban and Mortad in Section \ref{sec:ChabanMortad}.

\section{The main result}
\label{sec:MainResult}

We start with a simple observation of ``shrinking'', or ``rescaling'' property of bounded subsets of complex plane (lemma \ref{lemma:z0congruenceScaling} below). Our main result is then formulated as Theorem \ref{thm:MainResult}.

\begin{lemma}\label{lemma:z0congruenceScaling}
For every bounded nonempty set $U\subset\mathbb{C}$ and every $z \in\mathbb{C} \setminus \{0\}$, there exists such $\tau > 0$ small enough, that set $t U = \{t w : w\in U \}$ is $z$--congruence free for every $t \in (0,\tau)$.
\end{lemma}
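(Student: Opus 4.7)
The plan is to translate the $z$-congruence free condition directly into an elementary bound on the diameter of $U$. By definition, $tU$ fails to be $z$-congruence free precisely when there exist distinct $w_1,w_2 \in U$ and some $k \in \mathbb{Z} \setminus \{0\}$ with $t(w_1-w_2) = kz$, equivalently $t(w_1-w_2)/z = k$. Since $|k| \geq 1$ for any nonzero integer $k$, it suffices to arrange that $|t(w_1-w_2)/z| < 1$ uniformly over all pairs $w_1,w_2 \in U$.

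First I would set $d = \sup_{w_1,w_2 \in U} |w_1 - w_2|$, which is finite because $U$ is bounded. If $d = 0$ then $U$ is a singleton and has no distinct pairs at all, so $tU$ is trivially $z$-congruence free for every $t > 0$ and any $\tau > 0$ works. Otherwise, take $\tau = |z|/d$; then for any $t \in (0, \tau)$ and any $w_1, w_2 \in U$ one has
\begin{equation*}
	\left| \frac{t(w_1 - w_2)}{z} \right| \leq \frac{td}{|z|} < 1,
\end{equation*}
so this quantity cannot equal a nonzero integer $k$, whence $t(w_1-w_2) \neq kz$ for every $k \in \mathbb{Z}\setminus\{0\}$. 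This is exactly the $z$-congruence freedom of $tU$.

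There is no real obstacle here; the only thing to watch out for is the degenerate singleton case (which is handled separately) and the fact that $z$-congruence freedom does not require $s_1 \neq s_2$ in the defining inequality, but the case $s_1 = s_2$ corresponds to $k=0$ which is excluded anyway. The argument does not use anything beyond boundedness of $U$ and the discreteness of $\mathbb{Z}$ inside $\mathbb{C}$.
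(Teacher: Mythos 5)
Your proof is correct and follows essentially the same route as the paper: you define the diameter $d = \sup_{w_1,w_2\in U}|w_1-w_2|$, take $\tau = |z|/d$, bound $t|w_1-w_2| < |z| \leq |kz|$ for nonzero integers $k$, and treat the singleton case separately. The only cosmetic difference is that you phrase the estimate as $|t(w_1-w_2)/z| < 1$ rather than $t|w_1-w_2| < |z|$.
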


\begin{proof}
Let $U \subset \mathbb{C}$ be nonempty and bounded and let
\begin{equation}
	\Delta = \sup_{z_1, z_2 \in U}{|z_1 - z_2|}
\end{equation}
be its \emph{diameter}. If $\Delta > 0$, define
\begin{equation}
	\tau = \frac{|z|}{\Delta}.
\end{equation}
Then, for every $t \in (0, \tau)$ and every pair of complex numbers $z_1, z_2 \in U$ we have $|z_1-z_2| \leqslant \Delta$ which yields
\begin{equation}\label{eq:tZdistance}
	t |z_1-z_2| < \tau \Delta = |z|.
\end{equation}
Let $tU = \{tz : z \in U\}$ and take any $w_1, w_2 \in tU$. As $w_1 = tz_1$ and $w_2 = tz_2$ for some $z_1, z_2 \in U$, equation \eqref{eq:tZdistance} implies 
\begin{equation}
	|w_{1} - w_{2}| = t | z_1 - z_2| < |z|,
\end{equation}
which automatically results in
\begin{equation}
	w_{1} - w_{2} \neq k z, \qquad k \in \mathbb{Z}\setminus \{0 \}
\end{equation}
for every $w_{1}, w_{2} \in t U$, i.e. $w_1 \neq w_2  \, (\mathrm{mod} \, z)$ and $t U$ is $z$-congruence free for any $t \in (0,\tau)$. On the other hand, if $\Delta = 0$, i.e. $U=\{ z_0 \}$ is a singleton, then set $tU$ is automatically $z$--congruence free for any $t\in (0,\infty)$ as $w_1 = w_2 = tz_0$ and
\begin{equation}
	w_{1}-w_{2} = 0 \neq kz, \quad k\in\mathbb{Z}\setminus \{0\}.
\end{equation}
Then, one can take any $\tau \in (0, \infty )$.
\end{proof}

\begin{theorem}\label{thm:MainResult}
Let $A, B \in B(X)$ and let $\sigma(A)$ be $2\pi i$--congruence free. Then, $e^AB=Be^A$ if and only if $AB=BA$.
\end{theorem}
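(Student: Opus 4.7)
The ``if'' direction is immediate from the power series expansion of $e^A$, so the entire content lies in proving that $e^A B = Be^A$ implies $AB=BA$. My plan is to reduce the problem to Wermuth's theorem (Theorem \ref{thm:Wermuth}) by rescaling $B$ with a small positive parameter and invoking Lemma \ref{lemma:z0congruenceScaling} to secure the spectral hypothesis for the rescaled operator.

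First, I would observe that the commutation $e^A B = B e^A$ propagates to all powers of $B$ by a trivial induction: if $e^A B^k = B^k e^A$ then $e^A B^{k+1} = (e^A B) B^k = B e^A B^k = B^{k+1} e^A$. Summing the power series of the exponential of $tB$ for any $t \in \mathbb{C}$, this yields
\begin{equation}
	e^A e^{tB} = e^{tB} e^A, \qquad t\in\mathbb{C}.
\end{equation}

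Next, since $B \in B(X)$ its spectrum $\sigma(B)$ is a nonempty bounded subset of $\mathbb{C}$, so Lemma \ref{lemma:z0congruenceScaling} produces $\tau > 0$ such that $t\sigma(B)$ is $2\pi i$--congruence free for every $t \in (0,\tau)$. Fix any such $t$. By the spectral mapping property $\sigma(tB) = t\sigma(B)$, both $\sigma(A)$ and $\sigma(tB)$ are now $2\pi i$--congruence free, and $e^A$ commutes with $e^{tB}$. Theorem \ref{thm:Wermuth} then gives $A(tB) = (tB)A$, and cancelling the scalar $t \neq 0$ yields $AB = BA$.

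The argument is essentially a one-line application of Wermuth, so there is no serious technical obstacle; the only thing to be careful about is that the hypothesis of Theorem \ref{thm:Wermuth} requires congruence freedom of both spectra, which is exactly the reason for the rescaling step and the use of the lemma. The step I expect to require the most care in the write-up is the passage from $e^A B = Be^A$ to $e^A e^{tB} = e^{tB} e^A$, since it is the place where one has to make explicit that the commutation with $B$ extends, through polynomials, to a norm-convergent series.
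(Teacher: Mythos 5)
Your proposal is correct and follows essentially the same route as the paper: propagate the commutation $e^A B = Be^A$ to $e^A e^{tB} = e^{tB} e^A$, use Lemma \ref{lemma:z0congruenceScaling} with $U = \sigma(B)$ to make $\sigma(tB) = t\sigma(B)$ $2\pi i$--congruence free for small $t>0$, and then apply Theorem \ref{thm:Wermuth} to conclude $t(AB-BA)=0$. The only difference is cosmetic: you spell out the induction on powers of $B$, whereas the paper simply notes that $e^A$ commutes with every analytic function of $B$.
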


\begin{proof}
We only need to address the ``$\Rightarrow$'' direction. Assume $e^A B = Be^A$. Then, $e^A$ commutes also with every analytic function of $B$, so in particular, for all $t\in\mathbb{R}$,
\begin{equation}
	e^A e^{tB} - e^{tB} e^A = 0.
\end{equation}
As $\sigma (B)$ is a nonempty bounded subset of $\mathbb{C}$, lemma \ref{lemma:z0congruenceScaling} invoked for $U = \sigma(B)$ guarantees that there exists such $\tau > 0$ that $t \sigma (B)$ is $2\pi i$--congruence free for any $t\in(0,\tau )$. Therefore, operator $tB$ is of $2\pi i$--congruence free spectrum. By virtue of Wermuth's result (theorem \ref{thm:Wermuth}),
\begin{equation}
	e^A e^{tB} - e^{tB} e^A = 0 \quad \Rightarrow \quad A \cdot tB - tB \cdot A = t(AB-BA) = 0
\end{equation}
for every $t\in (0,\tau)$, so $A$ and $B$ commute. Necessity is then clear.
\end{proof}

\begin{remark}
The assumption of $2\pi i$--congruence freedom of $\sigma(A)$ cannot be neglected, as the following (canonical) counterexample in $M_2 (\mathbb{C})$ shows: let us choose $A$ and $B$ as, say,
\begin{equation}
	A = \left( \begin{array}{cc} 0 & \pi \\ -\pi & 0 \end{array} \right), \qquad B = \left( \begin{array}{cc} 0 & a \\ a & 0 \end{array} \right)
\end{equation}
for some $a \in \mathbb{R}$. It is easy to verify by direct algebra that $e^A = -I$, so $e^A$ commutes with (any) $B$; however,
\begin{align}
	AB-BA = \left( \begin{array}{cc} 2\pi a & 0 \\ 0 & -2 \pi a \end{array} \right)
\end{align}
which does not vanish unless $a = 0$ and matrices $A$ and $B$ do not commute in general. This is not surprising, as one easily shows $\sigma (A) = \{\pi i, -\pi i\}$ and hence, $\sigma (A)$ is not $2\pi i$--congruence free.
\end{remark}

\section{Relation to result of Chaban and Mortad}
\label{sec:ChabanMortad}

As we mentioned in the Introduction, a similar result concerning C*-algebra $B(H)$, $H$ being a Hilbert space, was obtained some time ago by Chaban and Mortad \cite{Chaban2013}. For any $T \in B(H)$ we define its unique \emph{Cartesian decomposition} of a form
\begin{equation}
	T = \Re{T} + i\,\Im{T},
\end{equation}
where $\Re{T}$ and $\Im{T}$, called the \emph{real part} and the \emph{imaginary part} of $T$, respectively, are self-adjoint and bounded and given by
\begin{equation}
	\Re{T} = \frac{1}{2}(T + T^*), \qquad \Im{T} = \frac{1}{2i}(T-T^*).
\end{equation}
\begin{theorem}[\textbf{Chaban and Mortad}]\label{thm:ChabanMortad}
Let $A,B \in B(H)$ be such that $A$ is normal and $\sigma (\Im{A}) \subset (0,\pi )$, we have
\begin{equation}
	e^A B = B e^A \quad \Longleftrightarrow \quad AB = BA.
\end{equation}
\end{theorem}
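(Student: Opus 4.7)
My strategy is to deduce Theorem \ref{thm:ChabanMortad} directly from Theorem \ref{thm:MainResult} by verifying that the stronger hypothesis here — that $A$ is normal and $\sigma(\Im{A}) \subset (0,\pi)$ — implies $\sigma(A)$ itself is $2\pi i$-congruence free. Once this is established, Theorem \ref{thm:MainResult} delivers the equivalence $e^A B = B e^A \iff AB = BA$ at once, with no further assumption on $B$ needed.

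The central step is to connect $\sigma(\Im{A})$ with the set of imaginary parts of points in $\sigma(A)$. Since $A$ is normal, the continuous functional calculus on $C(\sigma(A))$ is available, and the operator $\Im{A} = (A - A^*)/(2i)$ is precisely the image of the continuous function $z \mapsto \Im{z}$ under this calculus (because the functional calculus is a $*$-homomorphism sending $\mathrm{id}$ to $A$ and $\overline{\mathrm{id}}$ to $A^*$). The spectral mapping theorem for continuous functions on normal operators therefore yields
\begin{equation*}
	\sigma(\Im{A}) \;=\; \{\Im{\lambda} : \lambda \in \sigma(A)\}.
\end{equation*}
Consequently, the assumption $\sigma(\Im{A}) \subset (0,\pi)$ is equivalent to the statement that $\Im{\lambda} \in (0,\pi)$ for every $\lambda \in \sigma(A)$.

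From here the $2\pi i$-congruence freedom of $\sigma(A)$ is routine: suppose for contradiction that there exist distinct $\lambda_1, \lambda_2 \in \sigma(A)$ and a nonzero integer $k$ with $\lambda_1 - \lambda_2 = 2\pi i k$. Taking imaginary parts gives $\Im{\lambda_1} - \Im{\lambda_2} = 2\pi k$; but both $\Im{\lambda_j}$ lie in $(0,\pi)$, so their difference must lie in $(-\pi,\pi)$, forcing $k = 0$ — a contradiction. Hence $\sigma(A)$ is $2\pi i$-congruence free, and Theorem \ref{thm:MainResult} completes the proof.

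The only subtle point is the spectral mapping step: the function $z \mapsto \Im{z}$ is not holomorphic, so the argument relies on the \emph{continuous} (rather than merely holomorphic) functional calculus, which is legitimate precisely because $A$ is assumed normal. Once that point is granted, the rest is a short interval-length argument, and all analytic work concerning exponentials has already been absorbed into Theorem \ref{thm:MainResult}; in particular, one does not need to manipulate $e^{A}$ or $e^{B}$ explicitly, nor to invoke any Hilbert-space-specific tools beyond the spectral theorem for normal operators.
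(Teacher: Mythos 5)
Your proposal is correct and follows essentially the same route as the paper: establish that $\sigma(A)$ is $2\pi i$--congruence free from the hypothesis $\sigma(\Im{A})\subset(0,\pi)$ (this is the paper's Lemma \ref{lemma:Ais2piiCF}) and then invoke Theorem \ref{thm:MainResult}. The only cosmetic difference is that you use the exact spectral mapping $\sigma(\Im{A})=\{\Im{\lambda}:\lambda\in\sigma(A)\}$ from the continuous functional calculus, whereas the paper gets by with the weaker inclusion $\sigma(A)\subset\sigma(\Re{A})+i\,\sigma(\Im{A})$ via the Gelfand transform; both legitimately yield the same interval-length argument.
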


This theorem is then proved by means of methods different to ours by employing e.g. Fuglede theorem and without making direct references to $2\pi i$--congruence freedom. However, one can easily show that the $2\pi i$--congruence freedom is in fact the case here which allows to formulate alternative proof of the above result by directly applying theorem \ref{thm:MainResult} (and hence Wermuth's theorem in consequence).

\begin{lemma}\label{lemma:Ais2piiCF}
Let $A\in B(H)$ satisfy assumptions of theorem \ref{thm:ChabanMortad}, i.e. $A$ is normal and $\Im{A} \subset (0,\pi )$. Then, $\sigma(A)$ is $2\pi i$--congruence free.
\end{lemma}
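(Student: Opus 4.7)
The plan is to leverage the normality of $A$ to translate the spectral assumption on $\Im A$ into a condition on the imaginary parts of points of $\sigma(A)$, and then derive the congruence freedom by an elementary measurement argument on imaginary parts.

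First, I would recall that for a normal operator $A \in B(H)$, the continuous (in fact, Borel) functional calculus applies to $A$ and $A^*$ jointly, and the spectral mapping theorem yields $\sigma(f(A,A^*)) = f(\sigma(A), \overline{\sigma(A)})$ for continuous $f$. Applying this to $f(z,\bar z) = \frac{z-\bar z}{2i} = \Im z$ gives
\begin{equation}
\sigma(\Im A) = \{ \Im \lambda : \lambda \in \sigma(A) \}.
\end{equation}
In particular, the hypothesis $\sigma(\Im A) \subset (0,\pi)$ translates to $\Im \lambda \in (0,\pi)$ for every $\lambda \in \sigma(A)$.

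Next, I would argue by contradiction: suppose $\sigma(A)$ fails to be $2\pi i$--congruence free. Then there exist distinct $\lambda_1, \lambda_2 \in \sigma(A)$ and $k \in \mathbb{Z} \setminus \{0\}$ with $\lambda_1 - \lambda_2 = 2\pi i k$. Taking imaginary parts,
\begin{equation}
\Im \lambda_1 - \Im \lambda_2 = 2\pi k, \qquad |2\pi k| \geqslant 2\pi.
\end{equation}
On the other hand, $\Im \lambda_1, \Im \lambda_2 \in (0,\pi)$ forces $|\Im \lambda_1 - \Im \lambda_2| < \pi$, which is a direct contradiction. Hence $\sigma(A)$ is $2\pi i$--congruence free.

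The only nontrivial ingredient is the identity $\sigma(\Im A) = \Im \sigma(A)$; this is standard for normal operators (it is a consequence of the spectral theorem, or alternatively of Fuglede's theorem, which the authors already mention in this context). Once this is in hand, the rest of the argument is purely arithmetic: the strip $(0,\pi)$ has width strictly less than $2\pi$, so no two imaginary parts within it can differ by a nonzero integer multiple of $2\pi$.
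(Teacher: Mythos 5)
Your proposal is correct and follows essentially the same route as the paper: both arguments reduce the claim to the fact that, by normality, every $\lambda\in\sigma(A)$ has $\Im{\lambda}\in\sigma(\Im{A})\subset(0,\pi)$ (the paper via the Gelfand-transform inclusion $\sigma(A)\subset\sigma(\Re{A})+i\,\sigma(\Im{A})$, you via the joint spectral mapping theorem), and then observe that two numbers in $(0,\pi)$ cannot differ by a nonzero integer multiple of $2\pi$. Your contradiction argument on imaginary parts is slightly more direct than the paper's two-case analysis on an enclosing rectangle, but the underlying idea is identical.
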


\begin{proof}
If $A$ is normal, i.e. $AA^*=A^* A$, then the Cartesian decomposition of $A$ constitutes of a pair $(\Re{A},\Im{A})$ of commuting normal self-adjoint bounded operators. In such case one can show, applying the Gelfand transform, that spectrum $\sigma(A)$ satisfies
\begin{equation}
	\sigma(A) \subset \sigma(\Re{A}) + i \, \sigma(\Im{A}).
\end{equation}
For convenience, let us enclose $\sigma(A)$ by a rectangle in $\mathbb{C}$. As $\Re{A} = (\Re{A})^{*}$ and $\| \Re{A}\| < \infty$, its spectrum is a bounded subset of $\mathbb{R}$ and one can define
\begin{equation}
	J = \inf{\{[x_1, x_2] \subset \mathbb{R} : \sigma(\Re{A}) \subset [x_1, x_2]\}}
\end{equation}
i.e. $J$ is the smallest interval containing the whole spectrum of $\Re{A}$. Then,
\begin{equation}
	\sigma(A) \subset J + i (0,\pi).
\end{equation}
Take any two $\lambda_1 , \lambda_2 \in J + i(0,\pi)$, $\lambda_{k} = a_k + i b_k $ for $a_k \in J$, $b_k \in (0,\pi )$, $k \in \{1,2\}$; there are two possible cases:

\begin{enumerate}
	\item If it happens that $a_1 = a_2$, then
\begin{equation}
	|\lambda_1 - \lambda_2 | = |b_1 - b_2| \leqslant \sup_{b_1,b_2 \in (0,\pi )}|b_1-b_2| = \pi < 2\pi,
\end{equation}
hence $\lambda_1 - \lambda_2 = i (b_1 - b_2) \neq 2k\pi i$ for every $b_1, b_2 \in (0,\pi)$ and every $k\in\mathbb{Z}\setminus \{0\}$.
	\item If, on the other hand $a_1 \neq a_2$, then automatically
\begin{equation}
	\lambda_1 - \lambda_2 = a_1 - a_2 + i(b_1-b_2) \neq 2 k \pi i, \quad k\in\mathbb{Z}\setminus\{0\}.
\end{equation}
\end{enumerate}
In consequence, set $J + i(0,\pi)$ is $2\pi i$--congruence free; the same can be then stated about $\sigma(\Im{A})$ as its subset.
\end{proof}

Finally, we present an alternative proof of theorem by Chaban and Mortad as a straightforward corollary of the above observation:

\begin{proof}[Proof of theorem \ref{thm:ChabanMortad}]
If $\sigma(\Im{A})\subset (0,\pi)$, then $\sigma(A)$ is $2\pi i$--congruence free by lemma \ref{lemma:Ais2piiCF}. Hence, theorem \ref{thm:MainResult} applies.
\end{proof}

\section{Acknowledgments}

Author acknowledges support received from National Science Centre, Poland via research grant No. 2016/23/D/ST1/02043.

\end{document}